\newtheorem{theorem}{Theorem}[section]
\newtheorem{lemma}[theorem]{Lemma}
\newtheorem{proposition}[theorem]{Proposition}
\newtheorem{corollary}[theorem]{Corollary}
\theoremstyle{definition}
\newtheorem{definition}[theorem]{Definition}
\newtheorem{example}[theorem]{Example}
\newtheorem{Theorem}{\quad Theorem}[section]
\newtheorem{remark}[Theorem]{\quad Remark}
\numberwithin{equation}{section}
\begin{document}
\title[More on $\omega$-orthogonality and $\omega$-parallelism]
{More on $\omega$-orthogonalities and $\omega$-parallelism}

\author[M. Torabian, M. Amyari and M. Moradian Khibary]
{M. Torabian$^1$, M. Amyari $^{2*}$ and M. Moradian Khibary$^3$}

\address{$^1,^{2*}$ Department of Mathematics, Mashhad Branch,
Islamic Azad University, Mashhad, Iran}
\email{m.torabian93@gmail.com}
\email{amyari@mshdiau.ac.ir and maryam\_amyari@yahoo.com}

\address{$^3$ Department of Mathematics, Farhangian University, Mashhad, Iran}
\email{mmkh926@gmail.com}

\subjclass[2010]{Primary 47A12; Secondary 47L05, 46C05.}

\keywords{Hilbert space, numerical radius Birkhoff orthogonality, numerical radius parallelism.\\
$*$Corresponding author}

\begin{abstract}
We investigate some aspects of various numerical radius orthogonalities and numerical radius parallelism
 for bounded linear operators on a Hilbert space $\mathscr{H}$. Among several results, we show that if
 $T,S\in \mathbb{B}(\mathscr{H})$ 
 and $M^*_{\omega(T)}=M^*_{\omega(S)}$, then $T\perp_{\omega B} S$ if and only if $S\perp_{\omega B} T$, where
$M^*_{\omega(T)}=\{\{x_n\}:\,\,\,\|x_n\|=1, \lim_n|\langle Tx_n, x_n\rangle|=\omega(T)\}$, and $\omega(T)$ is the numerical radius of $T$ and $\perp_{\omega B}$
is the numerical radius Birkhoff orthogonality.
\end{abstract}

\maketitle
\section{Introduction and preliminary}
Let $(\mathscr{H}, \langle  \cdot, \cdot \rangle)$ be a complex Hilbert space and $\mathbb{B}(\mathscr{H})$ be the $C^*$-algebra of all bounded linear operators.
The numerical range of an operator $T\in \mathbb{B}(\mathscr{H})$ is the subset of the 
complex numbers $\mathbb{C}$ given by $W(T)=\{\langle Tx, x\rangle:\,\,x\in \mathscr{H},\,\, \|x\|=1\}$ and the numerical radius of $T$ is defined by $$\omega(T)=\sup\{|\langle Tx, x\rangle|:\,\,\|x\|=1\}.$$
It is known that $\omega(T)$ is a norm on $\mathbb{B}(\mathscr{H})$ satisfying
\begin{equation}\label{11}
\omega(T)\leq \|T\| \leq2\omega(T),
\end{equation}
where $\|T\|$ denotes the operator norm of $T$. If $T$ is self-adjoint, then $\omega(T)=\|T\|$  \cite{GU}.
The authors in \cite[Theorem 1.1]{JS} proved that if
$
T=\begin{bmatrix}
a&b\\
0&d
\end{bmatrix}
$
for all $a, b, d\in\mathbb{C}$, then
\begin{align}\label{12}
\omega(T)=\dfrac{1}{2}|a+d|+\dfrac{1}{2}\sqrt{|a-d|^2+|b|^2}.
\end{align}
Let $T,\,\,S\in\mathbb{B}(\mathscr{H})$, if $S^*T=0$, then we say that $T$ is {\it orthogonal} to $S$ and we write $S\perp T$.
We say that $T$ is {\it Birkhoff orthogonal} to $S$, if $\|T+\lambda S\|\geq \|T\|$ for all $\lambda\in\mathbb{C}$ and we write $T\perp_B S$.
 It is easy to show that the Birkhoff orthogonality is homogeneous (that is, if $T\perp_B S$,
 then $\alpha T\perp_B \beta S$ for all $\alpha,\beta\in\mathbb{C}$). In general, the Birkhoff orthogonality is neither additive nor symmetric.
 Turn\^sek  \cite{AT} proved the Birkhoff orthogonality is symmetric
 if and only if one of operators is a scalar multiple of an isometry or coisometry.

Let $(X, \|.\|)$ be a normed space. An element $x\in X$ is said to be {\it norm-parallel} to an element $y\in X$, denote by $x\parallel y$, if
$$\|x+\lambda y\|=\|x\|+\|y\|\quad\mbox{for some}\quad\lambda\in\mathbb{T},$$
where $\mathbb{T}=\{\mu\in\mathbb{C}:|\mu|=1\}.$\\
For two operators $T,\, S\in \mathbb{B}(\mathscr{H})$,
the authors in \cite{MM},  introduced the notion of parallelism.
 The operator $T$ is called the {\it numerical radius parallel} to $S$, denote by $T\parallel_\omega S$, if
$$\omega(T+\lambda S)=\omega(T)+\omega(S)\quad\mbox{for some}\quad\lambda\in\mathbb{T}$$
Now, we give some types of orthogonalities for operators on a Hilbert space $\mathscr{H}$ based on the notion of the numerical radius.

\begin{definition}
Suppose that $T, S\in \mathbb{B}(\mathscr{H})$.\\

(i) The operator $T$ is called the {\it numerical radius Pythagorean orthogonal} to $S$, denoted by $T\perp_{\omega p}S$, if $$\omega^2(T+S)=\omega^2(T)+\omega^2(S).$$
This notion is introduced in \cite{AM}.\\
(ii) The operator $T$ is called the {\it numerical radius Birkhoff orthogonality} to $S$, denoted by $T\perp_{\omega B}S$, if
$$\omega(T+\lambda S)\geq\omega(T)\quad\mbox{for all}\quad\lambda\in\mathbb{C}.$$
\end{definition}

 Recently, some mathematicians studied numerical radius orthogonality and  numerical radius parallelism for
 operators, for instance, see \cite{BS, BC, KM,  KS, AZ}.

\section{Numerical radius Birkhoff orthogonality}
In this section, we aim to prove some properties of numerical radius Birkhoff orthogonality $\perp_{B\omega}$. Suppose that $T,\,S\in \mathbb{B}(\mathscr{H})$.
 It is easy to see that $\perp_{\omega B}$ is nondegenerate ($T\perp_{\omega B}T$ if and only if $T=0$) and homogenous ($T\perp_{\omega B}S  \Rightarrow \alpha T\perp_{\omega B} \beta S$ for all $\alpha,\beta\in\mathbb{C}$). The authors of \cite{AM} showed that (i) $T\perp_{\omega B}S$ if and only if $T^*\perp_{\omega B}S^*$; (ii) if $T$ is self-adjoint, then $T\perp_{\omega B}S$ implies that $T\perp_B S$; (iii) if $T^2=0$, then
$T\perp_B S$ implies that $T\perp_{\omega B}S$.\\
%%%%%%%%%%%%%%%%%%%%%%%%%%%%%%%%%%%%%%%%%%%%%%%%%%%%%
In addition, if $S\perp T$ and $S$ is surjective, then $S^*T=0$ and
\begin{align*}
\omega(T)&=\sup\{|\langle Tx,x\rangle|:\,\, \|x\|=1\}=\sup\{|\langle TSx_1,Sx_1\rangle|:\,\, \|Sx_1\|=1\}\\
&=\sup\{|\langle S^*TSx_1,x_1\rangle|:\,\, \|Sx_1\|=1\}=0.
\end{align*}
Hence $T=0$. As a consequence, we get $S\perp_{\omega B} T$.\\
The following example shows that $\perp$ does not imply $\perp_{\omega B}$, in general. Indeed, the condition of surjectivity is necessary.
%%%%%%%%%%%%%%%%%%%%%%%%%%%%%%%%%%%%%%%%%%%%%%%%%%%%
\begin{example}
Suppose that
$
S=\begin{bmatrix}
0&-1\\
0&1
\end{bmatrix}\quad
T=\begin{bmatrix}
0&1\\
0&1
\end{bmatrix}
$
 are in $M_2(\mathbb{C})$ and $S$ is not surjective. Then
$S^*T=0,$
that is $S\perp T$. By equation \eqref{12}, $\omega(S)=\frac{1+\sqrt{2}}{2}$
and for $\lambda=-1$, we get $\omega(S-T)=1$.
Therefore $\omega(S)>\omega(S-T)$. Hence, $S\not\perp_{\omega B}T$.
\end{example}
%%%%%%%%%%%%%%%%%%%%%%%%%%%%%%%%%%%%%%%%%%%%%%%%%
The following theorem gives a characterization of the numerical radius Birkhoff orthogonality of
operators on a  Hilbert space.
 %%%%%%%%%%%%%%%%%%%%%%%%%%%%%%%%%%%%%%%%%%%%%%%%%%%%%
\begin{theorem}\cite[Theorem 2.3]{AM}\label{22}
Let $T, S\in \mathbb{B}(\mathscr{H})$. Then  $T\perp_{\omega B}S$
 if and only if for each $\theta \in [0, 2\pi)$, there exists a sequence $\{x^{\theta}_n\}_{n\in\mathbb{N}}$ of unit vectors in $\mathscr{H}$ such that the
following two conditions hold:

(i) $\displaystyle\lim_n |\langle Tx^{\theta}_n, x^{\theta}_n\rangle|=\omega(T)$,

(ii) $\displaystyle\lim_n {\rm  Re}\{e^{-i\theta} \langle Tx^{\theta}_n, x^{\theta}_n\rangle\overline{\langle Sx^{\theta}_n, x^{\theta}_n\rangle}\}\geq 0$.
\end{theorem}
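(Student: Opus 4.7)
\medskip
\noindent\textbf{Proof proposal.}
The plan is to treat the two implications separately; the forward direction is a routine calculation, while the reverse direction is the substantive one and requires an approximation argument in which the step size $\lambda \to 0$ is carefully balanced against the defect in the definition of $\omega$ as a supremum.

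For the easy direction $(\Leftarrow)$, I would fix any $\lambda \in \mathbb{C}$, write $\lambda = re^{i\theta}$ with $r \geq 0$, and use the given sequence $\{x_n^\theta\}$. Setting $a_n = \langle Tx_n^\theta, x_n^\theta \rangle$ and $b_n = \langle S x_n^\theta, x_n^\theta \rangle$, one expands
\[
|a_n + \lambda b_n|^2 = |a_n|^2 + 2r\,\mathrm{Re}\bigl(e^{-i\theta} a_n \overline{b_n}\bigr) + r^2|b_n|^2.
\]
Since $\omega(T+\lambda S)^2 \geq |a_n + \lambda b_n|^2$, passing to the limit and using conditions (i) and (ii) yields $\omega(T+\lambda S)^2 \geq \omega(T)^2$, which is what we need.

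The main work is in $(\Rightarrow)$. Here I would fix $\theta \in [0,2\pi)$ and, for each $n$, set $\lambda_n = \frac{1}{n} e^{i\theta}$. By hypothesis, $\omega(T + \lambda_n S) \geq \omega(T)$, so by the definition of $\omega$ as a supremum I can choose a unit vector $x_n^\theta$ with
\[
\bigl|\langle (T + \lambda_n S) x_n^\theta, x_n^\theta \rangle\bigr|^2 > \omega(T)^2 - \tfrac{1}{n^2}.
\]
Writing again $a_n = \langle T x_n^\theta, x_n^\theta\rangle$ and $b_n = \langle S x_n^\theta, x_n^\theta\rangle$, expanding the left side, and rearranging gives
\[
|a_n|^2 > \omega(T)^2 - \tfrac{2}{n}\,|a_n||b_n| - \tfrac{1}{n^2}|b_n|^2 - \tfrac{1}{n^2},
\]
from which $|a_n| \to \omega(T)$ (since $|b_n| \leq \omega(S)$ is bounded). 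This gives (i). For (ii) I would instead isolate the middle term: the same inequality rewrites as
\[
\mathrm{Re}\bigl(e^{-i\theta} a_n \overline{b_n}\bigr) > \frac{n\bigl(\omega(T)^2 - |a_n|^2\bigr)}{2} - \frac{|b_n|^2}{2n} - \frac{1}{2n},
\]
and since $|a_n| \leq \omega(T)$ the first term on the right is nonnegative, hence
$\liminf_n \mathrm{Re}\bigl(e^{-i\theta} a_n \overline{b_n}\bigr) \geq 0$, giving (ii).

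The subtle point, and the place where I expect the main obstacle to lie, is the simultaneous calibration of the step size $\lambda_n$ and the supremum-approximation error. Too large an error relative to $|\lambda_n|$ ruins condition (ii); too small a $|\lambda_n|$ weakens the control one needs to force (i). The pair $\lambda_n = \frac{1}{n}e^{i\theta}$ with tolerance $\frac{1}{n^2}$ is chosen precisely so that both $\frac{\epsilon_n}{|\lambda_n|}$ and $|\lambda_n|$ tend to $0$. Beyond this, everything reduces to expanding a modulus squared and using $|a_n| \leq \omega(T)$, $|b_n| \leq \omega(S)$.
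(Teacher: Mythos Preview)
The paper does not supply its own proof of this theorem: it is quoted verbatim from \cite[Theorem~2.3]{AM} and used as a black box in the subsequent arguments. So there is nothing in the present paper to compare your argument against.

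That said, your proof is essentially correct and is the natural one. The backward direction is fine as written. In the forward direction your choice of $\lambda_n=\frac{1}{n}e^{i\theta}$ together with the tolerance $\frac{1}{n^2}$ is exactly the right calibration, and your derivation of~(i) and of $\liminf_n \mathrm{Re}(e^{-i\theta}a_n\overline{b_n})\geq 0$ is clean. One small point to tidy up: the statement asks for an actual limit in~(ii), not merely a $\liminf$. Since $a_n\overline{b_n}$ is bounded (by $\omega(T)\omega(S)$), pass to a subsequence along which $\mathrm{Re}(e^{-i\theta}a_n\overline{b_n})$ converges; condition~(i) survives on any subsequence, and the limit in~(ii) is then $\geq 0$ by your $\liminf$ bound. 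With that one sentence added, the proof is complete.
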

%%%%%%%%%%%%%%%%%%%%%%%%%%%%%%%
Now, we show that  $`` \perp_{\omega B}"$ is not symmetric, in general.
%%%%%%%%%%%%%%%%%%%%%%%%%%%%%%%%%%
\begin{example}
Suppose that
$ T=
\begin{bmatrix}
1&0\\
0&0
\end{bmatrix}
\,\, and \,\,
S=
\begin{bmatrix}
0&1\\
0&-1
\end{bmatrix}
$
are in $M_2(\mathbb{C})$. Then equation \eqref{12} implies that
$\omega(T)=1$, and
$\omega(T+\lambda S)=\dfrac{1}{2}|1-\lambda|+\dfrac{1}{2}\sqrt{|1+\lambda|^2+|\lambda|^2}\geq\dfrac{1}{2}|1-\lambda+1+\lambda|=1$
for each $\lambda \in \mathbb{C}$.
Hence $\omega(T+\lambda S)\geq \omega(T)$ and so $T\perp_{\omega B} S$.
On the other hand,
$\omega(S)=\dfrac{1+\sqrt{2}}{2}=1.207.$
For $\lambda=1$, we get $\omega(S+ T)=\dfrac{\sqrt{5}}{2}=1.118<1.207$.
Hence, $\omega(S+\lambda T)< \omega(S)$, and so $S\not\perp_{\omega B} T$.
\end{example}
We can show that $\perp_{\omega B}$ is symmetric if one of the operators is identity.
\begin{proposition}
Let $T\in \mathbb{B}(\mathscr{H})$. If $T\perp_{\omega B}I$, then $I\perp_{\omega B}T$.
 \begin{proof}
Let $\theta \in [0, 2\pi)$ be given. Since $T\perp_{\omega B}I$, for $\varphi=2\pi -\theta $  there exists a sequence $\{x^{\varphi}_n\}_{n\in\mathbb{N}}$ of unit vectors in $\mathscr{H}$ such that
$\displaystyle\lim_n |\langle Tx^{\varphi}_n, x^{\varphi}_n\rangle|=\omega(T)$ and
 $\displaystyle\lim_n{\rm  Re}\{e^{-i\varphi}\langle Tx^{\varphi}_n, x^{\varphi}_n\rangle  \overline{\langle Ix^{\varphi}_n, x^{\varphi}_n\rangle}\}\geq 0$.
 Put $x^{\theta}_n=x^{\varphi}_n$. Then $\omega(I)=1=\displaystyle\lim_n |\langle Ix^{\theta}_n, x^{\theta}_n\rangle|$ and
 $\displaystyle\lim_n{\rm  Re}\{e^{-i\theta}\langle Ix^{\theta}_n, x^{\theta}_n\rangle  \overline{\langle Tx^{\theta}_n, x^{\theta}_n\rangle}\}=
 \displaystyle\lim_n{\rm  Re}\{\overline{e^{-i\theta}\langle Ix^{\theta}_n, x^{\theta}_n\rangle}\langle Tx^{\theta}_n, x^{\theta}_n\rangle\}\geq 0$. That is $I\perp_{\omega B}T$.
\end{proof}
\end{proposition}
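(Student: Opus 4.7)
The plan is to apply Theorem \ref{22} (the sequential characterization of $\perp_{\omega B}$) in both directions and to exploit the fact that the identity operator contributes trivially to the inner-product condition. First I would unpack the hypothesis $T\perp_{\omega B} I$: for every $\varphi\in [0,2\pi)$ there is a sequence $\{x_n^\varphi\}$ of unit vectors with $\lim_n|\langle Tx_n^\varphi,x_n^\varphi\rangle|=\omega(T)$ and $\lim_n{\rm Re}\{e^{-i\varphi}\langle Tx_n^\varphi,x_n^\varphi\rangle\overline{\langle Ix_n^\varphi,x_n^\varphi\rangle}\}\geq 0$. Since $\langle Ix_n^\varphi,x_n^\varphi\rangle=\|x_n^\varphi\|^2=1$, the sign condition collapses to $\lim_n{\rm Re}\{e^{-i\varphi}\langle Tx_n^\varphi,x_n^\varphi\rangle\}\geq 0$.

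To conclude $I\perp_{\omega B} T$, I must exhibit, for each $\theta\in[0,2\pi)$, a unit-vector sequence $\{y_n^\theta\}$ meeting the two conditions of Theorem \ref{22} with the roles of $T$ and $I$ swapped. Condition (i) is automatic because $\omega(I)=1=\lim_n|\langle Iy_n^\theta,y_n^\theta\rangle|$ for any sequence of unit vectors. Condition (ii) reduces, after again cancelling the trivial $I$-factor, to $\lim_n{\rm Re}\{e^{-i\theta}\overline{\langle Ty_n^\theta,y_n^\theta\rangle}\}\geq 0$.

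The clean maneuver is the identity ${\rm Re}(\overline{w})={\rm Re}(w)$, which gives ${\rm Re}\{e^{-i\theta}\overline{z}\}={\rm Re}\{e^{i\theta} z\}$. Setting $\varphi:=2\pi-\theta$ so that $e^{-i\varphi}=e^{i\theta}$, and recycling the sequence already granted by the hypothesis by putting $y_n^\theta:=x_n^\varphi$, both required conditions follow directly from what we already have. I do not expect a serious obstacle: the whole proof is a matter of the angle substitution $\theta\mapsto 2\pi-\theta$ combined with the conjugation-invariance of the real part, so the only care needed is in correctly tracking the complex conjugate and the sign of the exponent when applying Theorem \ref{22} in the reverse direction.
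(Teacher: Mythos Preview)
Your proposal is correct and mirrors the paper's own argument essentially step for step: both apply Theorem~\ref{22} in each direction, set $\varphi=2\pi-\theta$ so that $e^{-i\varphi}=e^{i\theta}$, reuse the sequence $\{x_n^\varphi\}$, and invoke ${\rm Re}(\overline{w})={\rm Re}(w)$ to flip the conjugate. There is no substantive difference in method or detail.
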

%%%%%%%%%%%%%%%%%%%%%%%%%%%%%%%%%%%%%%%%%%%%%%%%%%%%%%
Let  $T\in \mathbb{B}(\mathscr{H})$.  The set of all sequences in the closed unit ball $\mathscr{H}$ at which $T$ attains numerical radius is denoted by
$$M^*_{\omega(T)}=\{\{x_n\}:\,\,\,\|x_n\|=1, \lim_n|\langle Tx_n, x_n\rangle|=\omega(T)\}.$$

%%%%%%%%%%%%%%%%%%%%%
Suppose that $T\perp_{\omega B} S$. The following theorem gives some conditions under which $S\perp_{\omega B} T$.
%%%%%%%%%%%%%%%%%%%%%%%%%%%%%%%%
\begin{lemma}
Let $T,\, S \in \mathbb{B}(\mathscr{H})$ and $T\perp_{\omega B} S$. If
for each sequence $\{x_n\}\in M^*_{\omega(T)}$ there exists a sequence  $\{y_n\}\in M^*_{\omega(S)}$
with $\lim_n|\langle x_n, y_n\rangle|=1$, then $S\perp_{\omega B} T$.
\begin{proof}
For each $\lambda \in \mathbb{C}$ there exists $\theta \in [0, 2\pi)$ such that $\lambda=e^{-i\theta}  |\lambda|$.
Since $T\perp_{\omega B} S$, there exists $\{x^{\theta}_n\}\in M^*_{\omega(T)}$ such that
 $$\lim_n|\langle Tx^{\theta}_n, x^{\theta}_n\rangle|=\omega(T) \,\,\text{and}\,\,\lim_n {\rm  Re} \{e^{-i\theta}\langle Tx^{\theta}_n, x^{\theta}_n\rangle\overline{\langle S x^{\theta}_n, x^{\theta}_n\rangle}\}\geq 0.$$
Hence
\begin{align*}
\omega^2(S+\lambda T) &\geq  \lim_n|\langle (S+\lambda T) x^{\theta}_n, x^{\theta}_n\rangle|^2\\
&=\lim_n\Big(|\langle S x^{\theta}_n, x^{\theta}_n\rangle|^2+2|\lambda|{\rm  Re} \{e^{-i\theta}\langle Tx^{\theta}_n, x^{\theta}_n\rangle\overline{\langle S x^{\theta}_n, x^{\theta}_n\rangle}\}+|\lambda|^2|\langle Tx^{\theta}_n, x^{\theta}_n\rangle|^2\Big)\\
& \geq \lim_n|\langle S x^{\theta}_n, x^{\theta}_n\rangle|^2.
\end{align*}

By the assumption, there exists $\{y_n\}$ with $\|y_n\|=1$ such that $\lim_n|\langle Sy_n, y_n\rangle|=\omega(S)$
 and $\lim_n|\langle x^{\theta}_n, y_n\rangle|=1$.
For each $n$, we can write $y_n=\alpha_nx^{\theta}_n+z_n$ with $\langle x^{\theta}_n, z_n\rangle=0$ for some $\alpha_n \in \mathbb{C}$.
Then $1=\lim_n|\langle x^{\theta}_n, y_n\rangle|=\lim_n|\alpha_n|$. From $1=\|y_n\|^2=|\alpha_n|^2+\|z_n\|^2$ we conclued that
$\lim_n\|z_n\|=0$. The Cauchy–Schwarz inequality implies that
\begin{align*}
\omega(S)=\lim_n|\langle Sy_n, y_n\rangle|&=\lim_n|\langle S(\alpha_nx^{\theta}_n+z_n), \alpha_nx^{\theta}_n+z_n\rangle|\\
&=\lim_n||\alpha_n|^2\langle Sx^{\theta}_n, x^{\theta}_n\rangle+\alpha_n\langle Sx^{\theta}_n, z_n\rangle+\bar{\alpha_n}\langle Sz_n, x^{\theta}_n\rangle+\langle Sz_n,z_n\rangle|\\
& =\lim_n|\langle Sx^{\theta}_n, x^{\theta}_n\rangle|.
\end{align*}
Therefore $\omega(S) \leq \omega(S+\lambda T)$, that is $S\perp_{\omega B} T$.
\end{proof}
\end{lemma}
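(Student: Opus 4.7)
The goal is to verify $\omega(S+\lambda T)\geq \omega(S)$ for every $\lambda\in\mathbb{C}$. I would first fix $\lambda$ and write $\lambda=e^{-i\theta}|\lambda|$ for some $\theta\in[0,2\pi)$. Applying Theorem \ref{22} to the hypothesis $T\perp_{\omega B}S$ at this $\theta$ delivers a sequence $\{x_n^{\theta}\}\in M^*_{\omega(T)}$ satisfying
\[
\lim_n {\rm Re}\{e^{-i\theta}\langle Tx_n^{\theta},x_n^{\theta}\rangle\overline{\langle Sx_n^{\theta},x_n^{\theta}\rangle}\}\geq 0.
\]

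I would then test the numerical radius of $S+\lambda T$ against $\{x_n^{\theta}\}$. Expanding $|\langle(S+\lambda T)x_n^{\theta},x_n^{\theta}\rangle|^2$ yields the three terms $|\langle Sx_n^{\theta},x_n^{\theta}\rangle|^2$, a mixed term equal to $2|\lambda|\,{\rm Re}\{e^{-i\theta}\langle Tx_n^{\theta},x_n^{\theta}\rangle\overline{\langle Sx_n^{\theta},x_n^{\theta}\rangle}\}$, and $|\lambda|^2|\langle Tx_n^{\theta},x_n^{\theta}\rangle|^2$. The cross term is non-negative in the limit by the choice of $\{x_n^\theta\}$, so after discarding the non-negative pieces one obtains
\[
\omega^2(S+\lambda T)\geq \liminf_n |\langle Sx_n^{\theta},x_n^{\theta}\rangle|^2.
\]

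The crux, and the step I expect to be the main obstacle, is identifying the right-hand side with $\omega^2(S)$. For this I would invoke the extra hypothesis to pick $\{y_n\}\in M^*_{\omega(S)}$ with $|\langle x_n^{\theta},y_n\rangle|\to 1$, and decompose $y_n=\alpha_n x_n^{\theta}+z_n$ with $z_n\perp x_n^{\theta}$. Then $|\alpha_n|=|\langle x_n^{\theta},y_n\rangle|\to 1$ and $|\alpha_n|^2+\|z_n\|^2=\|y_n\|^2=1$ together force $\|z_n\|\to 0$. Expanding $\langle Sy_n,y_n\rangle$ into its four scalar components and controlling the three $z_n$-dependent ones by $\|S\|\|z_n\|$ via Cauchy--Schwarz, only the diagonal contribution $|\alpha_n|^2\langle Sx_n^{\theta},x_n^{\theta}\rangle$ survives in the limit, yielding $\omega(S)=\lim_n|\langle Sx_n^{\theta},x_n^{\theta}\rangle|$. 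Substituting this identification into the previous display gives $\omega(S+\lambda T)\geq \omega(S)$, and since $\lambda$ was arbitrary we conclude $S\perp_{\omega B}T$. The subtlety worth watching is precisely in that last step: boundedness of $S$ combined with $\|z_n\|\to 0$ is exactly what prevents $S$ from producing a stubborn contribution through the $z_n$-directions, so the hypothesis $|\langle x_n^\theta,y_n\rangle|\to 1$ cannot be relaxed to mere non-orthogonality of the limit.
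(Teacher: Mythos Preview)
Your proposal is correct and follows essentially the same route as the paper's proof: fix $\lambda=e^{-i\theta}|\lambda|$, use the Theorem~\ref{22} characterization to obtain $\{x_n^{\theta}\}\in M^*_{\omega(T)}$ with nonnegative cross term, bound $\omega^2(S+\lambda T)$ from below by $\lim_n|\langle Sx_n^{\theta},x_n^{\theta}\rangle|^2$, and then identify that limit with $\omega^2(S)$ via the orthogonal decomposition $y_n=\alpha_n x_n^{\theta}+z_n$ together with Cauchy--Schwarz. Your use of $\liminf$ and the explicit bound $\|S\|\|z_n\|$ for the mixed terms are in fact slightly more careful than the paper's presentation, but the argument is the same.
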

As a consequence of Theorem \ref{22}, we get the following.
\begin{theorem}
Let $T,\, S \in \mathbb{B}(\mathscr{H})$ and $M^*_{\omega(T)}=M^*_{\omega(S)}$.
Then $T\perp_{\omega B} S$ if and only if $S\perp_{\omega B} T$.
 \end{theorem}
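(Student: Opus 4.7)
The plan is to follow the template set by the preceding Proposition, i.e.\ exploit the characterization in Theorem \ref{22} together with the key observation that conjugating the complex number in condition (ii) of that theorem amounts to replacing the angle $\theta$ by $-\theta$, while the hypothesis $M^*_{\omega(T)}=M^*_{\omega(S)}$ lets us reuse the same approximating sequence for both operators.

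In detail, by symmetry it suffices to prove the forward implication. Assume $T\perp_{\omega B}S$ and fix $\theta\in[0,2\pi)$. Set $\varphi=2\pi-\theta$, so $e^{-i\varphi}=e^{i\theta}$. Applying Theorem \ref{22} to $T\perp_{\omega B}S$ at the angle $\varphi$, I obtain a sequence $\{x_n^{\varphi}\}$ of unit vectors with
\[
\lim_n|\langle Tx_n^{\varphi},x_n^{\varphi}\rangle|=\omega(T)
\quad\text{and}\quad
\lim_n\operatorname{Re}\!\bigl\{e^{i\theta}\,\langle Tx_n^{\varphi},x_n^{\varphi}\rangle\,\overline{\langle Sx_n^{\varphi},x_n^{\varphi}\rangle}\bigr\}\ge 0.
\]
The first limit says $\{x_n^{\varphi}\}\in M^*_{\omega(T)}$, so by the hypothesis $M^*_{\omega(T)}=M^*_{\omega(S)}$ we also have $\lim_n|\langle Sx_n^{\varphi},x_n^{\varphi}\rangle|=\omega(S)$. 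Thus defining $y_n^{\theta}:=x_n^{\varphi}$ immediately gives condition (i) of Theorem \ref{22} for $S\perp_{\omega B}T$.

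For condition (ii), the observation is that for any complex numbers $a,b$,
\[
\operatorname{Re}\!\bigl\{e^{-i\theta}\,b\,\overline{a}\bigr\}=\operatorname{Re}\!\bigl\{\overline{e^{-i\theta}b\overline{a}}\bigr\}=\operatorname{Re}\!\bigl\{e^{i\theta}\,a\,\overline{b}\bigr\}.
\]
Applying this with $a=\langle Tx_n^{\varphi},x_n^{\varphi}\rangle$ and $b=\langle Sx_n^{\varphi},x_n^{\varphi}\rangle$ transforms the second limit above into
\[
\lim_n\operatorname{Re}\!\bigl\{e^{-i\theta}\,\langle Sy_n^{\theta},y_n^{\theta}\rangle\,\overline{\langle Ty_n^{\theta},y_n^{\theta}\rangle}\bigr\}\ge 0,
\]
which is precisely condition (ii) of Theorem \ref{22} for $S\perp_{\omega B}T$ at the angle $\theta$. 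Since $\theta$ was arbitrary, Theorem \ref{22} yields $S\perp_{\omega B}T$. The reverse implication follows by interchanging the roles of $T$ and $S$, using that the hypothesis $M^*_{\omega(T)}=M^*_{\omega(S)}$ is symmetric in $T$ and $S$.

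There is no real obstacle: the content is entirely in the symmetry of the hypothesis on $M^*_{\omega(\cdot)}$ and in the identity $\operatorname{Re}(e^{-i\theta}b\overline{a})=\operatorname{Re}(e^{i\theta}a\overline{b})$. The only bookkeeping to be careful about is the correct choice of auxiliary angle ($\varphi=2\pi-\theta$, as already used in the Proposition on $I$) so that the conjugation swaps the roles of $T$ and $S$ in condition (ii) of Theorem \ref{22}.
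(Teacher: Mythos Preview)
Your proof is correct and follows exactly the approach the paper indicates: the paper provides no detailed argument here, simply stating that the result is ``a consequence of Theorem~\ref{22},'' and your use of the characterization together with the angle substitution $\varphi=2\pi-\theta$ and the identity $\operatorname{Re}(e^{-i\theta}b\overline{a})=\operatorname{Re}(e^{i\theta}a\overline{b})$ fills in precisely those details. One could alternatively read the theorem as an immediate corollary of the preceding Lemma (take $y_n=x_n$, so that $\lim_n|\langle x_n,y_n\rangle|=1$ trivially), but that route ultimately unwinds to the same computation you carried out.
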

%%%%%%%%%%%%%%%%%%%%%%%%%%%
 Now, we recall the following proposition that we will need in what follows.
%%%%%%%%%%%%%%%%%%%%%%%%%%%%%%%%%%%%%%%%%%%%5
\begin{proposition}\label{23} \cite[Proposition 3.6]{AA}
Let $T, S\in \mathbb{B}(\mathscr{H})$. Then the equality $\omega(T+S)=\omega(T)+\omega(S)$ holds if and
only if there exists a sequence $\{x_n\}_{n\in\mathbb{N}}$ of unit vectors in $\mathscr{H}$ such that
$$\displaystyle\lim_n \langle Tx_n, x_n\rangle\overline{\langle Sx_n, x_n\rangle}=\omega(T)\omega(S)$$
\end{proposition}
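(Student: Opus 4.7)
The plan is to reduce everything to the scalar sequences $u_n:=\langle Tx_n,x_n\rangle$ and $v_n:=\langle Sx_n,x_n\rangle$ attached to a sequence $\{x_n\}$ of unit vectors, and to exploit repeatedly the elementary identity
$$|u_n+v_n|^2 \;=\; |u_n|^2+|v_n|^2+2\,\mathrm{Re}(u_n\overline{v_n}),$$
together with the a priori bounds $|u_n|\le\omega(T)$ and $|v_n|\le\omega(S)$. Both implications will turn on a simple squeeze argument: if two nonnegative sequences are dominated by constants $A$ and $B$ and their sum (or product) tends to $A+B$ (resp.\ $AB$), then each must individually tend to its bound.

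For the forward direction, I would start from $\omega(T+S)=\omega(T)+\omega(S)$ and pick unit vectors $x_n$ with $|\langle (T+S)x_n,x_n\rangle|\to\omega(T+S)$. The chain
$$|u_n+v_n|\;\le\;|u_n|+|v_n|\;\le\;\omega(T)+\omega(S)$$
is squeezed from both ends in the limit, so $|u_n|+|v_n|\to\omega(T)+\omega(S)$, and the squeeze argument upgrades this to $|u_n|\to\omega(T)$, $|v_n|\to\omega(S)$. The identity above combined with $|u_n+v_n|^2\to(\omega(T)+\omega(S))^2$ then forces $\mathrm{Re}(u_n\overline{v_n})\to\omega(T)\omega(S)$. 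Since already $|u_n\overline{v_n}|\le\omega(T)\omega(S)$, the imaginary part of $u_n\overline{v_n}$ must vanish in the limit, and we obtain $u_n\overline{v_n}\to\omega(T)\omega(S)$.

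For the converse, given unit vectors $x_n$ with $u_n\overline{v_n}\to\omega(T)\omega(S)$, I first take absolute values to get $|u_n||v_n|\to\omega(T)\omega(S)$. Assuming both numerical radii are nonzero (the cases $\omega(T)=0$ or $\omega(S)=0$ reduce to $T=0$ or $S=0$, where the equivalence is trivial), the multiplicative form of the squeeze argument yields $|u_n|\to\omega(T)$ and $|v_n|\to\omega(S)$. Because the limit $\omega(T)\omega(S)$ is a nonnegative real, $\mathrm{Re}(u_n\overline{v_n})\to\omega(T)\omega(S)$ as well, and the identity gives $|u_n+v_n|^2\to(\omega(T)+\omega(S))^2$. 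Consequently $\omega(T+S)\ge\lim_n|\langle(T+S)x_n,x_n\rangle|=\omega(T)+\omega(S)$, while the triangle inequality for $\omega$ provides the reverse.

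The main delicate point I expect is the squeeze step itself: deducing from $|u_n||v_n|\to\omega(T)\omega(S)$ (or the additive analogue) with $|u_n|\le\omega(T)$ and $|v_n|\le\omega(S)$ that each factor converges to its bound. This I would handle by extracting, via Bolzano--Weierstrass, a convergent subsequence $|u_{n_k}|\to c\le\omega(T)$ and observing that $c<\omega(T)$ would force $|v_{n_k}|\to\omega(T)\omega(S)/c>\omega(S)$, contradicting the upper bound; hence every convergent subsequence of $\{|u_n|\}$ tends to $\omega(T)$. Once this is in place the whole argument is just the binomial expansion of $|u+v|^2$ and the definition of $\omega(\cdot)$ as a supremum.
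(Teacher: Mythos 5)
Your argument is correct. Note, however, that the paper offers no proof of this statement at all: it is imported verbatim as \cite[Proposition 3.6]{AA}, so there is nothing internal to compare against. Your write-up is essentially the standard argument behind that cited result --- expand $|\langle (T+S)x_n,x_n\rangle|^2$, use the bounds $|\langle Tx_n,x_n\rangle|\le\omega(T)$, $|\langle Sx_n,x_n\rangle|\le\omega(S)$, and squeeze --- and all the delicate points you flag (the additive and multiplicative squeeze steps, the vanishing of the imaginary part via $|{\rm Im}\,z|^2=|z|^2-({\rm Re}\,z)^2$, and the degenerate cases $\omega(T)=0$ or $\omega(S)=0$) are handled correctly.
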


%%%%%%%%%%%%%%%%%%%%%%%%%%%%%%%%%%%%%%%%%%%%%%%%%%%%
\begin{theorem}
Let $T,S\in \mathbb{B}(\mathscr{H})$. Then the following statements hold:

(i) If $T\perp_{\omega B}(\omega(T)S-\omega(S)T)$, then $\omega(T+S)=\omega(T)+\omega(S)$.

(ii) If $\omega(e^{-i\theta}T+S)=\omega(T)+\omega(S)$ for each $\theta \in [0, 2\pi)$, then $T\perp_{\omega B}(\omega(S)T-\omega(T)S)$.
\end{theorem}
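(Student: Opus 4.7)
The plan is to reduce both implications to the numerical-radius Birkhoff characterization of Theorem \ref{22} together with the triangle-equality criterion of Proposition \ref{23}, choosing the auxiliary parameters so that the relevant real-part inequality collapses to an identity.

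For part (i), I would set $R:=\omega(T)S-\omega(S)T$ and specialize Theorem \ref{22} to $\theta=0$. This yields a sequence $\{x_n\}$ of unit vectors in $\mathscr{H}$ with $|\langle Tx_n,x_n\rangle|\to\omega(T)$ and $\lim_n{\rm Re}\{\langle Tx_n,x_n\rangle\overline{\langle Rx_n,x_n\rangle}\}\ge 0$. Expanding
\[
\langle Rx_n,x_n\rangle=\omega(T)\langle Sx_n,x_n\rangle-\omega(S)\langle Tx_n,x_n\rangle
\]
and using that $|\langle Tx_n,x_n\rangle|^2\to\omega(T)^2$ reduces the inequality (after dividing by $\omega(T)$; the case $\omega(T)=0$ forces $T=0$ and is trivial) to $\lim_n{\rm Re}\{\langle Tx_n,x_n\rangle\overline{\langle Sx_n,x_n\rangle}\}\ge\omega(T)\omega(S)$. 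On the other hand, ${\rm Re}\,z\le|z|$ together with $|\langle Tx_n,x_n\rangle\overline{\langle Sx_n,x_n\rangle}|\le\omega(T)\omega(S)$ gives the reverse bound, forcing equality in the limit; the squeeze also forces the imaginary part to vanish, so $\langle Tx_n,x_n\rangle\overline{\langle Sx_n,x_n\rangle}\to\omega(T)\omega(S)$. Proposition \ref{23} then delivers $\omega(T+S)=\omega(T)+\omega(S)$.

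For part (ii), I would let $R:=\omega(S)T-\omega(T)S$ and verify both conditions of Theorem \ref{22} for the pair $(T,R)$. Fix $\theta\in[0,2\pi)$ and set $\theta':=2\theta\pmod{2\pi}$. Since $\omega(e^{-i\theta'}T)=\omega(T)$, the hypothesis together with Proposition \ref{23} applied to $e^{-i\theta'}T$ and $S$ produces a unit-vector sequence $\{x_n^{\theta}\}$ with
\[
\lim_n\langle Tx_n^{\theta},x_n^{\theta}\rangle\overline{\langle Sx_n^{\theta},x_n^{\theta}\rangle}=e^{i\theta'}\omega(T)\omega(S).
\]
Taking moduli and combining with $|\langle Tx_n^{\theta},x_n^{\theta}\rangle|\le\omega(T)$ and $|\langle Sx_n^{\theta},x_n^{\theta}\rangle|\le\omega(S)$ forces $|\langle Tx_n^{\theta},x_n^{\theta}\rangle|\to\omega(T)$, which is condition (i) of Theorem \ref{22}. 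For condition (ii), expanding $\overline{\langle Rx_n^{\theta},x_n^{\theta}\rangle}=\omega(S)\overline{\langle Tx_n^{\theta},x_n^{\theta}\rangle}-\omega(T)\overline{\langle Sx_n^{\theta},x_n^{\theta}\rangle}$ and passing to the limit yields
\[
\lim_n e^{-i\theta}\langle Tx_n^{\theta},x_n^{\theta}\rangle\overline{\langle Rx_n^{\theta},x_n^{\theta}\rangle}=\omega(T)^2\omega(S)\bigl[e^{-i\theta}-e^{i(\theta'-\theta)}\bigr],
\]
and with $\theta'=2\theta$ this reduces to $-2i\,\omega(T)^2\omega(S)\sin\theta$, whose real part is $0\ge 0$. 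Theorem \ref{22} then gives $T\perp_{\omega B}R$.

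The one subtle step is the choice $\theta'=2\theta$ in (ii): this is precisely the matching that makes the cross-term coming from $\omega(T)\langle Sx_n^{\theta},x_n^{\theta}\rangle$ align in phase with $e^{-i\theta}|\langle Tx_n^{\theta},x_n^{\theta}\rangle|^2$, so the residue is purely imaginary. Everything else is routine book-keeping on real parts and moduli, and I do not expect any deeper obstacle; the degenerate cases $\omega(T)=0$ or $\omega(S)=0$ collapse to $R=0$ and are immediate.
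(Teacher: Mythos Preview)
Your argument is correct and follows the same overall route as the paper. Part (i) is identical: specialize Theorem \ref{22} at $\theta=0$, expand the cross term, squeeze to obtain $\langle Tx_n,x_n\rangle\overline{\langle Sx_n,x_n\rangle}\to\omega(T)\omega(S)$, and invoke Proposition \ref{23}. In part (ii) there is a minor variation: the paper uses the hypothesis at the \emph{same} angle $\theta$ (not $2\theta$) and, rather than going through Proposition \ref{23}, expands $|\langle(e^{-i\theta}T+S)x_n^\theta,x_n^\theta\rangle|^2$ directly to obtain $\lim_n{\rm Re}\{e^{-i\theta}\langle Tx_n^\theta,x_n^\theta\rangle\overline{\langle Sx_n^\theta,x_n^\theta\rangle}\}=\omega(T)\omega(S)$; the resulting real part of the $R$-term is then $\omega(T)^2\omega(S)\bigl(1-\cos\theta\bigr)\ge 0$ instead of your exact $0$. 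Your doubling trick $\theta'=2\theta$ via Proposition \ref{23} is a neat alternative that avoids the Bolzano--Weierstrass subsequence step, but the underlying strategy---verify both conditions of Theorem \ref{22} using the angle-parametrized hypothesis---is the same.
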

\begin{proof}
 (i): Let $T\perp_{\omega B}(\omega(T)S-\omega(S)T)$.
Then for $\theta=0$, there exists a sequence $\{x_n\}_{n\in\mathbb{N}}$ of unit vectors in $\mathscr{H}$ such that
$$\lim_n |\langle Tx_n, x_n\rangle|=\omega(T)\,\, \text{and}\,\,\displaystyle\lim_n{\rm  Re}\left\{ {\langle Tx_n, x_n\rangle}\overline{\langle(\omega(T)S-\omega(S)T)x_n, x_n\rangle}\right\}\geq 0.$$
Therefore,
\small{
\begin{align*}
0 \leq \lim_n{\rm  Re}\left\{ \langle Tx_n, x_n\rangle \overline{\langle(\omega(T)S-\omega(S)T)x_n, x_n\rangle}\right\}&=\lim_n{\rm  Re}\left\{\omega(T) \langle Tx_n, x_n\rangle  \overline{\langle Sx_n, x_n\rangle}\right\}\\
&\quad -\lim_n{\rm  Re}\left\{\omega(S)\langle Tx_n, x_n\rangle\overline{\langle Tx_n, x_n\rangle}\right\}\\
&=\omega(T)\lim_n{\rm  Re}\left\{\langle Tx_n, x_n\rangle  \overline{\langle Sx_n, x_n\rangle}\right\}-\omega(S)\omega^2(T),
\end{align*}
}
from which we get $\lim_n{\rm  Re}\left\{\langle Tx_n, x_n\rangle  \overline{\langle Sx_n, x_n\rangle}\right\}\geq \omega(S)\omega(T)$. Hence, by passing to subsequences if necessary, we obtain
$$\omega(S)\omega(T)\leq \lim_n{\rm  Re}\left\{\langle Tx_n, x_n\rangle  \overline{\langle Sx_n, x_n\rangle}\right\}\leq \lim_n|\langle Tx_n, x_n\rangle  \overline{\langle Sx_n, x_n\rangle}|
\leq \omega(S)\omega(T).$$
Thus
$$\lim_n \langle Tx_n, x_n\rangle\overline{\langle Sx_n, x_n\rangle}=\omega(T)\omega(S).$$
It follows from Proposition \ref{23} that $\omega(T+S)=\omega(T)+\omega(S)$.

(ii): Let $\theta \in [0, 2\pi)$ such that $\omega(e^{-i\theta}T+S)=\omega(T)+\omega(S)$. Then there exists a sequence $\{x^{\theta}_n\}_{n\in\mathbb{N}}$ of unit vectors in $\mathscr{H}$
such that
\begin{align*}
\omega(T)+\omega(S)=\omega(e^{-i\theta}T+S)&=\displaystyle\lim_n|\langle (e^{-i\theta} T+S )x^{\theta}_n, x^{\theta}_n\rangle|\\
&\leq \displaystyle\lim_n |\langle e^{-i\theta}Tx^{\theta}_n, x^{\theta}_n\rangle|+\displaystyle\lim_n |\langle Sx^{\theta}_n, x^{\theta}_n\rangle|\\
&\leq \omega(T)+\omega(S),
\end{align*}
where we use Bolzano--Weierstrass theorem and pass to subsequences of $\{x^{\theta}_n\}_{n\in\mathbb{N}}$ if necessary.
 Hence $\displaystyle\lim_n |\langle e^{-i\theta}Tx^{\theta}_n, x^{\theta}_n\rangle|=\omega(T)$ and $\displaystyle\lim_n |\langle Sx^{\theta}_n, x^{\theta}_n\rangle|=\omega(S)$.
On the other hand,
\small{
\begin{align*}
(\omega(T)+\omega(S))^2=\omega^2(e^{-i\theta}T+S)&= \lim_n| \langle (e^{-i\theta}T+S)x^{\theta}_n, x^{\theta}_n\rangle|^2\\
&=\lim_n\left(|\langle e^{-i\theta} Tx^{\theta}_n, x^{\theta}_n\rangle|^2+2{\rm  Re}\left\{e^{-i\theta}\langle Tx^{\theta}_n, x^{\theta}_n\rangle \overline{\langle Sx^{\theta}_n, x^{\theta}_n\rangle}\right\}+|\langle Sx^{\theta}_n, x^{\theta}_n\rangle|^2\right)\\
&\leq \lim_n\left(|e^{-i\theta}\langle Tx^{\theta}_n, x^{\theta}_n\rangle|^2+2|e^{-i\theta}\langle Tx^{\theta}_n, x^{\theta}_n\rangle\overline{\langle Sx^{\theta}_n, x^{\theta}_n\rangle}|+|\langle Sx^{\theta}_n, x^{\theta}_n\rangle|^2\right)\\
&\leq \omega^2(T)+2\omega(T)\omega(S)+\omega^2(S)=(\omega(T)+\omega(S))^2.
\end{align*}
}
Hence, $\lim_n{\rm  Re}\left\{e^{-i\theta} {\langle Tx_n, x_n\rangle}\overline{\langle Sx_n, x_n\rangle}\right\}=\omega(T)\omega(S)$.
In addition,
\begin{align*}
\lim_n{\rm  Re}\left\{e^{-i\theta} {\langle Tx^{\theta}_n, x^{\theta}_n\rangle}\overline{\langle(\omega(T)S-\omega(S)T)x^{\theta}_n, x^{\theta}_n\rangle}\right\}&=
\lim_n{\rm  Re}\Big\{e^{-i\theta}\Big(\omega(T)\langle Tx^{\theta}_n, x^{\theta}_n\rangle  \overline{\langle Sx^{\theta}_n, x^{\theta}_n\rangle}\\
&\quad -\omega(S)\langle Tx^{\theta}_n, x^{\theta}_n\rangle\overline{\langle Tx^{\theta}_n, x^{\theta}_n\rangle}\Big)\Big\}\\
&=\omega(T)\Big(\lim_n{\rm  Re}\{e^{-i\theta}\langle Tx^{\theta}_n, x^{\theta}_n\rangle\overline{\langle Sx^{\theta}_n, x^{\theta}_n\rangle}\}\Big)\\
&\quad -\omega(S)\Big( \lim_n{\rm  Re}\{e^{-i\theta}|\langle Tx^{\theta}_n, x^{\theta}_n\rangle|^2\}\Big)\\
&= \omega^2(T)\omega(S)\Big(1-{\rm  Re}(e^{-i\theta})\Big) \geq0
\end{align*}
Note that $|{\rm  Re}(e^{-i\theta})|\leq 1$. Theorem \ref{22} implies that $T\perp_{\omega B}(\omega(T)S-\omega(S)T)$.
\end{proof}
%%%%%%%%%%%%%%%%%%%%%%%%%%%%%%%%%%%%%%%%%%%%%%%%

\begin{proposition}\label{24}
Let $T, S\in \mathbb{B}(\mathscr{H})$ and $T$ be a positive operator. Then $T\perp_{\omega B} S$ if and only if $T+I\perp_{\omega B} S$.
\end{proposition}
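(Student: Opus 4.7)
The plan is to derive the result directly from the sequential characterization in Theorem \ref{22}, using the fact that a positive operator has nonnegative real numerical values; this makes both the radius--attaining sequences and the sign condition of that theorem behave very rigidly when one passes from $T$ to $T + I$.

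I would begin by recording the consequences of $T \geq 0$. The operator $T + I$ is again positive self-adjoint, so $\langle Tx, x\rangle$ and $\langle (T+I)x, x\rangle$ are nonnegative reals for every unit vector $x$, and $\langle (T+I)x, x\rangle = \langle Tx, x\rangle + 1$. Taking suprema yields $\omega(T+I) = \omega(T) + 1$, and applying the same identity to a sequence of unit vectors shows that $|\langle Tx_n, x_n\rangle| \to \omega(T)$ iff $|\langle (T+I)x_n, x_n\rangle| \to \omega(T+I)$; hence $M^*_{\omega(T)} = M^*_{\omega(T+I)}$. Now fix $\theta \in [0, 2\pi)$ and a sequence $\{x^\theta_n\}$ in this common set. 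Since $|\langle Sx^\theta_n, x^\theta_n\rangle| \leq \omega(S)$, Bolzano--Weierstrass lets me pass to a subsequence along which $\langle Sx^\theta_n, x^\theta_n\rangle \to s$ and $\langle Tx^\theta_n, x^\theta_n\rangle \to \omega(T)$. Condition (ii) of Theorem \ref{22} for $T$ then becomes $\omega(T)\,{\rm Re}(e^{-i\theta}\bar s) \geq 0$, while the corresponding condition for $T+I$ becomes $(\omega(T)+1)\,{\rm Re}(e^{-i\theta}\bar s) \geq 0$; assuming $\omega(T) > 0$, both reduce to the single scalar inequality ${\rm Re}(e^{-i\theta}\bar s) \geq 0$.

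Combining these observations, for every $\theta$ the existence of a sequence in $M^*_{\omega(T)}$ fulfilling (i) and (ii) of Theorem \ref{22} for $T$ is equivalent to the existence of a sequence in $M^*_{\omega(T+I)}$ fulfilling (i) and (ii) for $T+I$; Theorem \ref{22} then gives $T \perp_{\omega B} S \Leftrightarrow T+I \perp_{\omega B} S$. The one nontrivial step is the reduction of the in--the--limit sign condition to a scalar inequality via Bolzano--Weierstrass; once that is done the equivalence is bookkeeping. I would also note that the case $T = 0$ merits separate attention, since there $T \perp_{\omega B} S$ holds automatically while $I \perp_{\omega B} S$ requires $0 \in \overline{W(S)}$, so the statement is really about $\omega(T) > 0$.
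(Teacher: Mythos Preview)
Your argument is correct and, for the forward implication, essentially identical to the paper's: both use Theorem~\ref{22}, pass to a subsequence so that $\langle Sx^\theta_n,x^\theta_n\rangle$ converges, and exploit that $\langle Tx^\theta_n,x^\theta_n\rangle\to\omega(T)$ is real and positive to factor it out of the sign condition. The one genuine difference is the converse: you run Theorem~\ref{22} symmetrically, whereas the paper gives a one-line norm estimate
\[
\omega(T)+1=\omega(T+I)\le\omega(T+I+\lambda S)\le\omega(T+\lambda S)+\omega(I)=\omega(T+\lambda S)+1,
\]
which avoids the sequential characterization entirely for that direction. Your route is perfectly valid but slightly heavier; the paper's converse is shorter and, incidentally, does not need $\omega(T)>0$.

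Your remark about $T=0$ is well taken and in fact flags a defect in the proposition itself rather than in your proof: $0\perp_{\omega B}S$ always holds, but $I\perp_{\omega B}S$ can fail (e.g.\ $S=I$), so the equivalence is false when $T=0$. The paper's forward argument silently uses $\omega(T)>0$ when dividing out $\langle Tx^\theta_n,x^\theta_n\rangle$, exactly as you do, so both proofs require that hypothesis; you are simply explicit about it.
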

\begin{proof}
 Let $T$ be positive and $T\perp_{\omega B}S$. Then for each $\theta\in[0,2\pi)$, there exists $\{x_n^{\theta}\}$ with $\|x_n^{\theta}\|=1$ such that
 \begin{align}\label{25}
\lim_n|\langle Tx_n^{\theta},x_n^{\theta}\rangle|=\omega(T)\,\,\text{and}\,\,
\lim_n {\rm  Re} \{e^{-i\theta} \langle Tx_n^{\theta},x_n^{\theta}\rangle\overline{\langle S x_n^{\theta},x_n^{\theta}\rangle}\}\geq 0.
\end{align}
By passing to subsequences we may assum that $\displaystyle\lim_n\langle S x_n^{\theta},x_n^{\theta}\rangle$ exists. Since $T$ is positive, $\omega(T+I)=\omega(T)+1$. Hence
\begin{align*}
\lim_n|\langle (T+I) x_n^{\theta}, x_n^{\theta}\rangle|&=\lim_n|\langle T x_n^{\theta},x_n^{\theta}\rangle+\langle I x_n^{\theta},x_n^{\theta}\rangle|\\
&=\lim_n|\langle T x_n^{\theta}, x_n^{\theta}\rangle+1|\\
&=\omega(T)+1=\omega(T+I).
\end{align*}
Furthermore,
$$\lim_n {\rm  Re} \{e^{-i\theta}\langle (T+I) x_n^{\theta},x_n^{\theta}\rangle\overline{\langle S x_n^{\theta},x_n^{\theta}\rangle}\}=\lim_n\Big({\rm  Re}\{e^{-i\theta}\langle T x_n^{\theta},x_n^{\theta}\rangle\overline{\langle S x_n^{\theta},x_n^{\theta}\rangle}\}
+ {\rm  Re}\{ e^{-i\theta}\overline{\langle S x_n^{\theta},x_n^{\theta}\rangle}\}\Big)\geq 0.$$
 Note that since $\langle T x_n^{\theta},x_n^{\theta}\rangle\geq 0$, inequality  (\ref{25})
  implies that $\lim_n {\rm  Re} \{e^{-i\theta}\overline{\langle S x_n^{\theta},x_n^{\theta}\rangle\}}\geq 0$.\\
Thus $T+I\perp_{\omega B} S$.\\
Conversely, let $T+I\perp_{\omega B} S$. Let $\lambda\in\mathbb{C}$. We have $\omega(T+I+\lambda S)>\omega(T+I)$.
In addition,
$$\omega(T)+1=\omega(T+I)\leq\omega(T+I+\lambda S)\leq\omega(T+\lambda S)+1.$$
Hence $\omega(T)\leq\omega(T+\lambda S)$. Thus $ T\perp_{\omega B} S $.
\end{proof}
In the following example, we show that the positivity condition on $T$ cannot be omitted in Proposition \ref{24}.
%%%%%%%%%%%%%%%%%%%%%%%%%%%%%%%%%%%%%%%%
\begin{example}
Let $T=\begin{bmatrix}
1&0\\
0&-1
\end{bmatrix}$, and
$ S=\begin{bmatrix}
1&1\\
0&1
\end{bmatrix}$.

Then $\omega(T)=1$ and $\omega(T+\lambda S)=|\lambda|+\dfrac{1}{2}\sqrt{4+|\lambda|^2}$ for each $\lambda\in\mathbb{C}$. Hence $ \omega(T+\lambda S)\geq 1$ and so $T\perp_{\omega B} S$. Furthermore,
 $T+I=\begin{bmatrix}
2&0\\
0&0
\end{bmatrix}$
and $\omega(T+I)=2$. On the other hand,
$$\omega(T+I+\lambda S)=\omega\left( \begin{bmatrix}
2+\lambda&\lambda\\
0&\lambda
\end{bmatrix}\right)=\dfrac{1}{2}|2+2\lambda|+\dfrac{1}{2}\sqrt{4+|\lambda|^2}.$$
If $\lambda=-1$, then $\omega(T+I-S)=\sqrt{\dfrac{5}{2}}<2$. So $T+I\not\perp_{\omega B} S$.
\end{example}

%%%%%%%%%%%%%%%%%%%%%%%%%%%%%%%%%%%%%%%%%%%%%%%%%%%%
\begin{remark}
At the end of this section, we show that for some special classes of operators $T\not\perp_{\omega B} S$.
Let $T, S\in \mathbb{B}(\mathscr{H})$ be positive operators and $\omega(S)\neq0$. Then $T\not\perp_{\omega B} S$.
To see this on the contrary, without loss of generality, assume that $T\perp_{\omega B} S$.
Then there exists a sequence
$\{x_n\}$ in $\mathscr{H}$ such that for each $\lambda\in\mathbb{C}$,
$$\lim_n|\langle Tx_n, x_n\rangle+\lambda\langle Sx_n, x_n\rangle|=\omega(T+\lambda S).$$
put $\lambda=-\dfrac{\omega(T)}{\omega(S)},\,\,\alpha=\dfrac{\lim_n \langle Sx_n, x_n\rangle}{\omega(S)},\,\,\beta=\dfrac{\lim_n\langle Tx_n, x_n\rangle}{\omega(T)}, \,\, (\alpha,\beta\in[0,1])$.
Then
\begin{align*}
\omega(T+\lambda S)&=\lim_n\Big|\langle Tx_n, x_n\rangle-\dfrac{\omega(T)}{\omega(S)}\langle Sx_n, x_n\rangle\Big|\\
&=|\beta-\alpha|\omega(T)<\omega(T),
\end{align*}
whis gives a contradiction.\\
Moreover, it is easy to show that in special case $\perp_{\omega B}$ is additive. Suppose that   $T, S,\,U\in \mathbb{B}(\mathscr{H})$ such that $S, U$ are two positive operators. Then both $T\perp_{\omega B} S$ and $T\perp_{\omega B} U$ if and only if  $T\perp_{\omega B} S+U$.
\end{remark}
%%%%%%%%%%%%%%%%%%%%%%%%%%%%%%%%%%%%%%
\section{Numerical radius parallelism and Numerical radius orthogonalities}

In this section, we state some relations between numerical radius parallelism and some types of
numerical radius orthogonalities.

\begin{theorem}\label{31} {\rm \cite[Theorem 2.2]{MM}}
Let $T,S\in \mathbb{B}(\mathscr{H})$. Then the following statements are equivalent:

(i)  $T \parallel_{\omega} S$.

(ii) There exists a sequence of unit vectors $\{x_n\}$ in $\mathscr{H}$ such that
\begin{align*}
\lim_{n\rightarrow\infty} \big|\langle Tx_n, x_n\rangle\langle Sx_n, x_n\rangle\big| = \omega(T)\omega(S).
\end{align*}
In addition, if $\{x_n\}$ is a sequence of unit vectors in $\mathscr{H}$ satisfying $(ii)$,
then it also satisfies $\displaystyle{\lim_{n\rightarrow\infty}}|\langle Tx_n, x_n\rangle| = \omega(T)$
and $\displaystyle{\lim_{n\rightarrow\infty}}|\langle Sx_n, x_n\rangle| = \omega(S)$.
\end{theorem}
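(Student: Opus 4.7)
The plan is to prove $(i)\Leftrightarrow(ii)$, together with the addendum about individual limits, by exploiting the triangle inequality with equality attained in the limit, and then choosing $\lambda\in\mathbb{T}$ to realise phase alignment. For $(i)\Rightarrow(ii)$, I would start from $\omega(T+\lambda S)=\omega(T)+\omega(S)$ for some $\lambda\in\mathbb{T}$, and by the definition of $\omega$ as a supremum extract a sequence $\{x_n\}$ of unit vectors with $|\langle(T+\lambda S)x_n,x_n\rangle|\to\omega(T)+\omega(S)$. The sandwich
\[
|\langle(T+\lambda S)x_n,x_n\rangle|\leq|\langle Tx_n,x_n\rangle|+|\langle Sx_n,x_n\rangle|\leq\omega(T)+\omega(S),
\]
together with the pointwise bounds $|\langle Tx_n,x_n\rangle|\leq\omega(T)$ and $|\langle Sx_n,x_n\rangle|\leq\omega(S)$, forces $|\langle Tx_n,x_n\rangle|\to\omega(T)$ and $|\langle Sx_n,x_n\rangle|\to\omega(S)$, which is the addendum. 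Multiplying these two limits yields $\lim_n|\langle Tx_n,x_n\rangle\,\langle Sx_n,x_n\rangle|=\omega(T)\omega(S)$, proving (ii).

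For $(ii)\Rightarrow(i)$, assume the product identity holds. In the nondegenerate case $\omega(T),\omega(S)>0$ (the degenerate case being immediate for any $\lambda\in\mathbb{T}$), the bounds $|\langle Tx_n,x_n\rangle|\leq\omega(T)$ and $|\langle Sx_n,x_n\rangle|\leq\omega(S)$ together with the product of the moduli tending to $\omega(T)\omega(S)$ force each modulus to tend to its supremum. Writing $\langle Tx_n,x_n\rangle=r_ne^{i\alpha_n}$ and $\langle Sx_n,x_n\rangle=s_ne^{i\beta_n}$, I would pass to a subsequence along which the unit-modulus phases converge, $e^{i\alpha_n}\to e^{i\alpha}$ and $e^{i\beta_n}\to e^{i\beta}$, using compactness of $\mathbb{T}$. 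Setting $\lambda=e^{i(\alpha-\beta)}\in\mathbb{T}$ then gives
\[
\omega(T+\lambda S)\geq\lim_n|\langle(T+\lambda S)x_n,x_n\rangle|=|\omega(T)e^{i\alpha}+\omega(S)e^{i\alpha}|=\omega(T)+\omega(S),
\]
and the reverse inequality is automatic from the norm property of $\omega$; hence $T\parallel_\omega S$.

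The one point I expect to require care is the phase-alignment step of the converse: equality in $|a+b|\leq|a|+|b|$ is a pointwise condition on the arguments, but here it is only attained asymptotically, so one must first extract a subsequence along which the individual phases $e^{i\alpha_n}$ and $e^{i\beta_n}$ converge (via compactness of $\mathbb{T}$) and argue alignment at the level of these limits rather than for each $n$. Once this is handled, the equivalence follows cleanly, and no deeper tool than the definitions of $\omega$ and $\parallel_\omega$ is needed.
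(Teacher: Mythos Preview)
The paper does not supply its own proof of this theorem: it is quoted verbatim as \cite[Theorem~2.2]{MM} and used as a black box, so there is nothing in the present paper to compare your argument against. That said, your outline is correct and is essentially the standard proof one finds in the source. The sandwich argument for $(i)\Rightarrow(ii)$ is exactly right, and your treatment of the addendum via the identity $AB-a_nb_n=A(B-b_n)+b_n(A-a_n)$ (two nonnegative summands with vanishing sum) is the clean way to force the individual limits in the nondegenerate case. For $(ii)\Rightarrow(i)$, your compactness-of-$\mathbb{T}$ extraction of convergent phases followed by the choice $\lambda=e^{i(\alpha-\beta)}$ is precisely the expected mechanism, and the final computation is correct.

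One small remark: your ``degenerate case being immediate'' deserves a word of caution regarding the addendum rather than the equivalence. If, say, $\omega(T)=0$, then $T=0$ and condition~(ii) is satisfied by \emph{every} sequence of unit vectors, yet such a sequence need not satisfy $\lim_n|\langle Sx_n,x_n\rangle|=\omega(S)$. So the addendum, read literally, requires both $\omega(T)$ and $\omega(S)$ to be nonzero; this is a feature of the statement as quoted, not a defect in your proof, but it is worth flagging when you write it up.
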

%%%%%%%%%%%%%%%%%%%%%%%%%%%%%%%
\begin{theorem}
Let $T,S\in \mathbb{B}(\mathscr{H})$ such that $\lim_n ({\rm  Re} \langle Tx_n, x_n\rangle\overline{\langle Sx_n, x_n\rangle})=0$ for each sequence $\{x_n\}\in\Big(M^*_{\omega(T)}\cap M^*_{\omega(S)}\Big)\cup M^*_{\omega(S+T)}$. Then the following statements are equivalent.

(i) $T\parallel_{\omega} S$;

(ii) $T\perp_{\omega p} S$.

\end{theorem}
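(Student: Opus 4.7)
The plan is to reduce both implications to Theorem \ref{31}, which says that $T\parallel_\omega S$ is equivalent to the existence of a sequence $\{x_n\}$ of unit vectors with $\lim_n|\langle Tx_n,x_n\rangle\langle Sx_n,x_n\rangle|=\omega(T)\omega(S)$, and that any such sequence automatically satisfies $\lim_n|\langle Tx_n,x_n\rangle|=\omega(T)$ and $\lim_n|\langle Sx_n,x_n\rangle|=\omega(S)$, i.e.\ lies in $M^*_{\omega(T)}\cap M^*_{\omega(S)}$. The central computational identity throughout is the expansion
$$|\langle(T+S)x_n,x_n\rangle|^2 = |\langle Tx_n,x_n\rangle|^2 + 2\,{\rm Re}\{\langle Tx_n,x_n\rangle\overline{\langle Sx_n,x_n\rangle}\} + |\langle Sx_n,x_n\rangle|^2,$$
in which the hypothesis of the theorem forces the middle term to vanish in the limit whenever $\{x_n\}$ belongs to $(M^*_{\omega(T)}\cap M^*_{\omega(S)})\cup M^*_{\omega(T+S)}$.

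For (i)$\Rightarrow$(ii), I would start from the sequence $\{x_n\}$ produced by Theorem \ref{31}. Since this sequence lies in $M^*_{\omega(T)}\cap M^*_{\omega(S)}$, the assumption kills the cross term, and the displayed identity immediately yields $\omega^2(T+S)\geq \omega^2(T)+\omega^2(S)$. For the reverse inequality, I would pick any unit-vector sequence $\{y_n\}$ with $|\langle(T+S)y_n,y_n\rangle|\to\omega(T+S)$; passing to a subsequence via Bolzano--Weierstrass ensures that $|\langle Ty_n,y_n\rangle|\to a$ and $|\langle Sy_n,y_n\rangle|\to b$ for some $a\leq\omega(T)$, $b\leq\omega(S)$. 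Because $\{y_n\}\in M^*_{\omega(T+S)}$, the hypothesis again kills the cross term, giving $\omega^2(T+S)=a^2+b^2\leq\omega^2(T)+\omega^2(S)$.

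For (ii)$\Rightarrow$(i), I would again select $\{y_n\}$ realizing $\omega(T+S)$ and extract a subsequence along which $|\langle Ty_n,y_n\rangle|\to a$ and $|\langle Sy_n,y_n\rangle|\to b$. Since $\{y_n\}\in M^*_{\omega(T+S)}$, the hypothesis and the expansion identity force $\omega^2(T+S)=a^2+b^2$. Combining with the standing assumption (ii) gives $a^2+b^2=\omega^2(T)+\omega^2(S)$, and coupled with $a\leq\omega(T)$, $b\leq\omega(S)$ this forces $a=\omega(T)$ and $b=\omega(S)$. Hence $|\langle Ty_n,y_n\rangle\langle Sy_n,y_n\rangle|\to\omega(T)\omega(S)$, and Theorem \ref{31} delivers $T\parallel_\omega S$.

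The only delicate point I anticipate is the bookkeeping of subsequences: one must extract subsequences so that the scalar sequences $|\langle Ty_n,y_n\rangle|$ and $|\langle Sy_n,y_n\rangle|$ actually converge (not just have bounded lim sup), and then verify that these subsequences still belong to the original $M^*$ set so that the hypothesis can legitimately be applied. Once this is handled, each direction is a short algebraic consequence of the displayed expansion together with Theorem \ref{31}.
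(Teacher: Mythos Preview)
Your proposal is correct and follows essentially the same route as the paper's proof: both directions hinge on Theorem~\ref{31}, the expansion of $|\langle(T+S)x_n,x_n\rangle|^2$, and the hypothesis killing the cross term on sequences in $M^*_{\omega(T)}\cap M^*_{\omega(S)}$ or $M^*_{\omega(T+S)}$. Your treatment is in fact slightly more careful than the paper's, which writes $\lim_n$ for $|\langle Ty_n,y_n\rangle|$ and $|\langle Sy_n,y_n\rangle|$ without explicitly passing to subsequences; your remark that any subsequence of a sequence in $M^*_{\omega(T+S)}$ remains in $M^*_{\omega(T+S)}$ is exactly the bookkeeping needed to make that step rigorous.
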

\begin{proof}
(i) $\Rightarrow$ (ii): Let $T\parallel_{\omega} S$. Employing Theorem \ref{31}, we get a sequence $\{x_n\}\in\Big(M^*_{\omega(T)}\cap M^*_{\omega(S)}\Big)$ such that
 $\lim_n|\langle Tx_n, x_n\rangle|=\omega(T)$ and $ \lim_n|\langle Sx_n, x_n\rangle|=\omega(S)$. Hence
\begin{align*}
\lim_n|\langle (T+S) x_n, x_n\rangle|^2&=\lim_n\Big(|\langle Tx_n, x_n\rangle|^2+2{\rm  Re} \langle Tx_n, x_n\rangle\overline{\langle Sx_n, x_n\rangle}+|\langle Sx_n, x_n\rangle|^2\Big)\\
&=\omega^2(T)+\omega^2(S).
\end{align*}
Let $\{y_n\}\in M^*_{\omega(T+S)}$. Hence
\begin{align*}
\omega^2(T)+\omega^2(S)&=\lim_n|\langle (T+S) x_n, x_n\rangle|^2 \leq\omega^2(T+S)\\
&=\lim_n|\langle (T+S) y_n,y_n\rangle|^2=\lim_n|\langle Ty_n, y_n\rangle|^2+\lim_n|\langle Sy_n, y_n\rangle|^2\\
&\leq\omega^2(T)+\omega^2(S).
\end{align*}
Then $\omega^2(T+S)=\omega^2(T)+\omega^2(S)$, which means that $T\perp_{\omega p}S$.

(ii) $\Rightarrow$ (i): Let $T\perp_{\omega p} S$ and $\{x_n\}\in M^*_{\omega(T+S)}$. Then
\begin{align*}
\omega^2(T)+\omega^2(S)=\omega^2(T+S)=\lim_n|\langle (T+S) x_n, x_n\rangle|^2&=\lim_n|\langle Tx_n, x_n\rangle|^2+|\langle Sx_n, x_n\rangle|^2\\ &\leq\omega^2(T)+\omega^2(S).
\end{align*}
Therefore $\lim_n|\langle Tx_n, x_n\rangle|=\omega(T)$ and $\lim_n|\langle Sx_n, x_n\rangle|=\omega(S)$. Theorem \ref{31} implies that $T\parallel_{\omega} S$.
\end{proof}
%%%%%%%%%%%%%%%%%%%%%%%%%%
\begin{corollary}
Let $T,\, S\in \mathbb{B}(\mathscr{H})$. Let $T$ be Hermitian and $S$ be skew-Hermitian. Then $T\parallel_{\omega} S$  if and only if $T\perp_{\omega p} S$.
\begin{proof}
Since $T=T^*$ and $S=-S^*$ we have $\langle Tx, x\rangle$ is a real number and
$\langle Sx, x\rangle$ is a purely imaginary number for every $x\in \mathbb{B}(\mathscr{H})$.
\end{proof}
\end{corollary}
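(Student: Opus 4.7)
The plan is to realize this corollary as an immediate application of the preceding theorem by verifying its hypothesis. That theorem states the equivalence $T \parallel_{\omega} S \Leftrightarrow T \perp_{\omega p} S$ whenever $\lim_n \operatorname{Re}\bigl(\langle Tx_n, x_n\rangle \overline{\langle Sx_n, x_n\rangle}\bigr) = 0$ for every sequence $\{x_n\} \in \bigl(M^{*}_{\omega(T)} \cap M^{*}_{\omega(S)}\bigr) \cup M^{*}_{\omega(T+S)}$. So the only thing to check is this real-part condition.

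The key observation, which the hint already isolates, is that a Hermitian operator has real numerical values and a skew-Hermitian operator has purely imaginary numerical values. First I would record these two facts: from $T = T^*$ one gets $\overline{\langle Tx, x\rangle} = \langle T^*x, x\rangle = \langle Tx, x\rangle$, so $\langle Tx, x\rangle \in \mathbb{R}$; from $S = -S^*$ one gets $\overline{\langle Sx, x\rangle} = \langle S^*x, x\rangle = -\langle Sx, x\rangle$, so $\langle Sx, x\rangle \in i\mathbb{R}$, and hence also $\overline{\langle Sx, x\rangle} \in i\mathbb{R}$.

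Consequently, for every unit vector $x$, the product $\langle Tx, x\rangle \overline{\langle Sx, x\rangle}$ is a real number times a purely imaginary number, which is purely imaginary; in particular its real part is $0$. This is actually stronger than what the theorem requires: the identity $\operatorname{Re}\bigl(\langle Tx_n, x_n\rangle \overline{\langle Sx_n, x_n\rangle}\bigr) = 0$ holds pointwise in $n$ for any sequence of unit vectors whatsoever, hence the limit vanishes and in particular it vanishes on the sets $M^{*}_{\omega(T)} \cap M^{*}_{\omega(S)}$ and $M^{*}_{\omega(T+S)}$.

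With the hypothesis verified, the claimed equivalence $T \parallel_{\omega} S \Leftrightarrow T \perp_{\omega p} S$ follows at once from the preceding theorem. There is no real obstacle in this argument; the content is entirely in recognizing that the Hermitian/skew-Hermitian pairing forces the cross-term to be purely imaginary, which is precisely the cancellation the general theorem needs.
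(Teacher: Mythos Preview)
Your proposal is correct and follows exactly the paper's approach: both reduce to the preceding theorem by observing that $\langle Tx,x\rangle$ is real and $\langle Sx,x\rangle$ is purely imaginary, so the cross-term has zero real part for every unit vector. You simply spell out the verification in more detail than the paper does.
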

%%%%%%%%%%%%%%%%%%%%%%%%%%%%
\begin{theorem}
Let $T,\, S \in \mathbb{B}(\mathscr{H})$. Then the following statements are equivalent:

(i) $\omega(T+S)=\omega(T-S)$ and $M^*_{\omega(T-S)}=M^*_{\omega (T+S)}$;

(ii)  $\lim_n {\rm  Re} \langle Tx_n, x_n\rangle\overline{\langle Sx_n, x_n\rangle}=0$ for every sequence $\{x_n\}\in M^*_{\omega(T-S)}\cup M^*_{\omega (T+S)}$,

\end{theorem}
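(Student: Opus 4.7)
The plan rests on the elementary identity
\begin{equation*}
|\langle (T+S)x,x\rangle|^2 - |\langle (T-S)x,x\rangle|^2 = 4\,\mathrm{Re}\,\langle Tx,x\rangle\overline{\langle Sx,x\rangle},
\end{equation*}
obtained by expanding each side using $\langle (T\pm S)x,x\rangle = \langle Tx,x\rangle\pm\langle Sx,x\rangle$. This identity links the three quantities appearing in (i) and (ii), so both directions of the equivalence will come out almost mechanically.

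For (i) $\Rightarrow$ (ii), I would take any $\{x_n\}\in M^*_{\omega(T-S)}\cup M^*_{\omega(T+S)}$. The hypothesis $M^*_{\omega(T-S)}=M^*_{\omega(T+S)}$ says this sequence lies in \emph{both} sets, so
\begin{equation*}
\lim_n|\langle (T+S)x_n,x_n\rangle|^2 = \omega^2(T+S) \quad\text{and}\quad \lim_n|\langle (T-S)x_n,x_n\rangle|^2 = \omega^2(T-S).
\end{equation*}
By the equality $\omega(T+S)=\omega(T-S)$, the two limits coincide, and the displayed identity immediately forces $\lim_n \mathrm{Re}\,\langle Tx_n,x_n\rangle\overline{\langle Sx_n,x_n\rangle}=0$.

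For (ii) $\Rightarrow$ (i), I would first pick $\{x_n\}\in M^*_{\omega(T+S)}$; by hypothesis the cross term vanishes in the limit, so the identity gives
\begin{equation*}
\omega^2(T+S)=\lim_n|\langle (T+S)x_n,x_n\rangle|^2=\lim_n|\langle (T-S)x_n,x_n\rangle|^2\leq \omega^2(T-S).
\end{equation*}
Running the same argument with a sequence in $M^*_{\omega(T-S)}$ yields the reverse inequality, establishing $\omega(T+S)=\omega(T-S)$. Using this common value, the chain above also shows $\lim_n|\langle (T-S)x_n,x_n\rangle|^2=\omega^2(T-S)$, i.e.\ $\{x_n\}\in M^*_{\omega(T-S)}$, and a symmetric argument for sequences in $M^*_{\omega(T-S)}$ finishes the set equality.

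There is no serious obstacle: the only thing to be careful about is bookkeeping in (ii) $\Rightarrow$ (i), where one must run the argument twice (once for a sequence in $M^*_{\omega(T+S)}$ and once for a sequence in $M^*_{\omega(T-S)}$) before the two $M^*$ sets can be shown to coincide, since a priori one cannot assume the common value of $\omega(T\pm S)$ before proving it. Passing to subsequences via Bolzano–Weierstrass is not required here because we are only manipulating squared absolute values, all of which converge along the chosen sequences by definition of $M^*_{\omega(\cdot)}$.
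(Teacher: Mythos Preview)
Your proof is correct and follows essentially the same route as the paper: both directions hinge on the expansion $|\langle (T\pm S)x,x\rangle|^2=|\langle Tx,x\rangle|^2\pm 2\,\mathrm{Re}\,\langle Tx,x\rangle\overline{\langle Sx,x\rangle}+|\langle Sx,x\rangle|^2$, which you package neatly as the single polarization-type identity at the outset while the paper writes out the two expansions separately. The logical flow---intersection of the $M^*$ sets in (i)$\Rightarrow$(ii), and the two-pass inequality argument in (ii)$\Rightarrow$(i)---matches the paper's proof exactly.
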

\begin{proof}
(i)$\Rightarrow$ (ii): Let $\{x_n\}\in M^*_{\omega(T-S)}\cup M^*_{\omega (T+S)}$. Since $M^*_{\omega(T-S)}=M^*_{\omega (T+S)}$, we get $\{x_n\}\in M^*_{\omega(T-S)}\cap M^*_{\omega (T+S)}$ and
\begin{align*}
\lim_n\Big(|\langle Tx_n, x_n\rangle|^2-2{\rm  Re} \langle Tx_n, x_n\rangle\overline{\langle Sx_n, x_n\rangle}+|\langle Sx_n, x_n\rangle|^2=\omega^2(T-S)\\
=\omega^2(T+S)=\lim_n \Big(|\langle Tx_n, x_n\rangle|^2+2{\rm  Re} \langle Tx_n, x_n\rangle\overline{\langle Sx_n, x_n\rangle}+|\langle Sx_n, x_n\rangle|^2\Big).
\end{align*}
Therefore $\lim_n {\rm  Re} \langle Tx_n, x_n\rangle\overline{\langle Sx_n, x_n\rangle}=0$.

(ii)$\Rightarrow$ (i): Let $\{x_n\}\in M^*_{\omega(T+S)}$ and  $ \{y_n\}\in M^*_{\omega (T-S)}$. Hence
$$\omega(T+S)=\lim_n|\langle (T+S) x_n, x_n\rangle|\,\,\text{and}\,\,\omega(T-S)=\lim_n|\langle (T-S) y_n, y_n\rangle|.$$
 By the assumption, we have
$$\omega^2(T+S)\geq\lim_n|\langle (T+S) y_n, y_n\rangle|^2=\lim_n\Big(|\langle Ty_n, y_n\rangle|^2+|\langle Sy_n, y_n\rangle|^2\Big)=\omega^2(T-S)$$
and
$$\omega^2(T-S)\geq\lim_n|\langle (T-S) x_n, x_n\rangle|^2= \lim_n\Big(|\langle Tx_n, x_n\rangle|^2+|\langle Sx_n, x_n\rangle|^2\Big)=\omega^2(T+S)$$
Therefore $\omega(T-S)=\omega(T+S)$, as well as $\{x_n\}\in M^*_{\omega(T-S)}$ and  $ \{y_n\}\in M^*_{\omega (T+S)}$. 
Thus $M^*_{\omega(T-S)}=M^*_{\omega (T+S)}$.
\end{proof}
%%%%%%%%%%%%%%%%%%%%%%%%%%%%%%%%%%%%%

\begin{corollary}\label{32}
If $\omega(T+S)=\omega(T-S)$ and $T\perp_{\omega p}S$, then the following statements are equivalent:

(i) $T\parallel_{\omega} S$

(ii) $T+S\parallel_{\omega} T-S$
\begin{proof}
(i)$\Rightarrow$ (ii): Let $T\parallel_{\omega} S$. Then by Theorem \ref{31}, there exists a sequence of unit vectors $\{x_n\}$ such that  $\lim_n|\langle Tx_n, x_n\rangle|=\omega(T)$  and  $\lim_n|\langle Sx_n, x_n\rangle|=\omega(S)$. By the assumption $T\perp_{\omega p}S$, we have
$$\omega^2(T)+\omega^2(S)=\omega^2(T+S)\geq\lim_n|\langle (T+S) x_n, x_n\rangle|^2=\omega^2(T)+\lim_n 2{\rm  Re}  \langle Tx_n, x_n\rangle\overline{\langle Sx_n, x_n\rangle}+\omega^2(S).$$
Thus $\lim_n 2Re \langle Tx_n, x_n\rangle\overline{\langle Sx_n, x_n\rangle}\leq0$.
By a similar computation for $\omega^2(T-S)$,  we get
$$\omega^2(T)+\omega^2(S)\geq\omega^2(T-S)\geq\lim_n|\langle (T-S) x_n, x_n\rangle|^2=\omega^2(T)-\lim_n 2{\rm  Re}  \langle Tx_n, x_n\rangle\overline{\langle Sx_n, x_n\rangle}+\omega^2(S).$$
Thus
$- \lim_n 2{\rm  Re}  \langle Tx_n, x_n\rangle\overline{\langle Sx_n, x_n\rangle}\leq 0$. Hence $\lim_n {\rm  Re} \langle Tx_n, x_n\rangle\overline{\langle Sx_n, x_n\rangle}=0$. Therefore $\{x_n\}\in M^*_{\omega(T+S)}\cap M^*_{\omega (T-S)}$, whence $T+S\parallel_{\omega}T-S$.

(ii)$\Rightarrow$(i):
Let $T+S\parallel_{\omega}T-S$, then there exists a sequence $\{x_n\}\in M^*_{\omega(T+S)}\cap M^*_{\omega (T-S)}$ such that
\begin{align*}
\omega^2(T)+\omega^2(S)=\omega^2(T+S)&=\lim_n|\langle (T+S) x_n, x_n\rangle|^2\\
&=\lim_n \Big(|\langle Tx_n, x_n\rangle|^2+2{\rm  Re} \langle Tx_n, x_n\rangle\overline{\langle Sx_n, x_n\rangle}+|\langle Sx_n, x_n\rangle|^2\Big)\\
&\leq \omega^2(T)+\lim_n2Re \langle Tx_n, x_n\rangle\overline{\langle Sx_n, x_n\rangle}+\omega^2(S),
\end{align*}
similary
$$\omega^2(T)+\omega^2(S)=\omega^2(T+S)=\omega^2(T-S)\leq \omega^2(T)-\lim_n2Re \langle Tx_n, x_n\rangle\overline{\langle Sx_n, x_n\rangle}+\omega^2(S).$$
Therefore $\lim_n Re \langle Tx_n, x_n\rangle\overline{\langle Sx_n, x_n\rangle}=0$ and
$\displaystyle\lim_n|\langle Tx_n, x_n\rangle|=\omega(T),\,\, \lim_n|\langle Sx_n, x_n\rangle|=\omega(S).$
Hence $T\parallel_{\omega} S$.
\end{proof}
\end{corollary}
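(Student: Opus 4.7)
The plan is to use Theorem~\ref{31} as the engine in both directions: the relation $A\parallel_{\omega} B$ is equivalent to the existence of a sequence $\{x_n\}$ of unit vectors with $\lim_n|\langle Ax_n,x_n\rangle\langle Bx_n,x_n\rangle|=\omega(A)\omega(B)$, and such a sequence automatically satisfies $\lim_n|\langle Ax_n,x_n\rangle|=\omega(A)$ and $\lim_n|\langle Bx_n,x_n\rangle|=\omega(B)$. Combining this with the Pythagorean hypothesis $\omega^2(T+S)=\omega^2(T)+\omega^2(S)$ and the equality $\omega(T+S)=\omega(T-S)$, I expect that an attaining sequence for $(T,S)$ can be transferred to one for $(T+S,T-S)$ and back, with the cross term ${\rm Re}\langle Tx_n,x_n\rangle\overline{\langle Sx_n,x_n\rangle}$ vanishing in the limit in both cases.

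For (i)$\Rightarrow$(ii) I would start with a sequence $\{x_n\}$ given by Theorem~\ref{31} with $\lim_n|\langle Tx_n,x_n\rangle|=\omega(T)$ and $\lim_n|\langle Sx_n,x_n\rangle|=\omega(S)$. Expanding $|\langle(T+S)x_n,x_n\rangle|^2$, bounding the limit above by $\omega^2(T+S)=\omega^2(T)+\omega^2(S)$, gives $\lim_n 2{\rm Re}\langle Tx_n,x_n\rangle\overline{\langle Sx_n,x_n\rangle}\le 0$; the analogous expansion for $T-S$ together with $\omega(T-S)=\omega(T+S)$ gives the reverse inequality. Hence the cross term tends to $0$, so $\lim_n|\langle(T\pm S)x_n,x_n\rangle|^2=\omega^2(T)+\omega^2(S)=\omega^2(T\pm S)$, and the product of these nonnegative factor limits equals $\omega(T+S)\omega(T-S)$. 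A second appeal to Theorem~\ref{31} yields $T+S\parallel_{\omega}T-S$.

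For (ii)$\Rightarrow$(i) I would reverse the argument. By Theorem~\ref{31} pick $\{x_n\}$ with $\lim_n|\langle(T+S)x_n,x_n\rangle|=\omega(T+S)$ and $\lim_n|\langle(T-S)x_n,x_n\rangle|=\omega(T-S)$. Adding the two expansions of $|\langle(T\pm S)x_n,x_n\rangle|^2$ cancels the cross terms and yields $\lim_n\bigl(|\langle Tx_n,x_n\rangle|^2+|\langle Sx_n,x_n\rangle|^2\bigr)=\omega^2(T)+\omega^2(S)$; since each summand is bounded above by the corresponding squared numerical radius, both bounds must be saturated in the limit. Multiplying the resulting factor limits and invoking Theorem~\ref{31} once more gives $T\parallel_{\omega}S$.

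The main technical step is the symmetric use of the Pythagorean identity together with $\omega(T+S)=\omega(T-S)$ to pin the cross term ${\rm Re}\langle Tx_n,x_n\rangle\overline{\langle Sx_n,x_n\rangle}$ to zero in the limit; once that is secured, the rest is bookkeeping with Theorem~\ref{31}. No subsequence trickery should be required beyond extracting convergent subsequences of bounded scalar sequences, because the product of limits of nonnegative reals is the limit of the products.
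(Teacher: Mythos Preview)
Your proposal is correct and follows essentially the same route as the paper: in both directions you use Theorem~\ref{31} to produce an attaining sequence, expand $|\langle (T\pm S)x_n,x_n\rangle|^2$, and combine the Pythagorean identity with $\omega(T+S)=\omega(T-S)$ to force the cross term ${\rm Re}\,\langle Tx_n,x_n\rangle\overline{\langle Sx_n,x_n\rangle}$ to vanish in the limit. The only cosmetic difference is in (ii)$\Rightarrow$(i), where you add the two squared expansions to cancel the cross term at once, whereas the paper bounds it from above and below separately before concluding $\lim_n|\langle Tx_n,x_n\rangle|=\omega(T)$ and $\lim_n|\langle Sx_n,x_n\rangle|=\omega(S)$; both yield the same conclusion with the same ingredients.
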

%%%%%%%%%%%%%%%%%%%%%%%%%
\begin{remark}
If $S\parallel_{\omega} T$ and $T+S \parallel _{\omega} T-S$, then by an argument as in Corollary \ref{32}, we can show that
$T\perp_{\omega p} S$ if and only if $\omega(T+S)=\omega(T-S)$.
\end{remark}
%%%%%%%%%%%%%%%%%%%%%%%%%%%%%%%%%%%%%%%

\end{document}